\newcommand{\ones}{\textbf{1}}
\newcommand{\bR}{\mathbb{R}}
\newtheorem{lemma}{Lemma}
\newtheorem{corollary}{Corollary}
\newtheorem{proposition}{Proposition}
\newtheorem{theorem}{Theorem}
\newtheorem{conjecture}{Conjecture}
\newtheorem{definition}{Definition}
\newtheorem{remark}{Remark}
\title{On the Products of Stochastic and Diagonal Matrices}
\author{Assaf Hallak\thanks{Equal contribution.} }
\author{Gal Dalal$^*$}
\affil{NVIDIA Research}
\date{April 2023}
\begin{document}
\maketitle
\begin{abstract}
    Consider a stochastic matrix $P$ and diagonal matrix $D.$ In this work, we introduce Tilted matrices. A Tilted matrix is the product $D'PD$, where $D'$ is a diagonal normalization that makes the product stochastic. We then provide several results on products of Tilted matrices, which can be desirable for analyses of Markov Decision Processes. 
Lastly, we obtain a convergence rate result for the product of Tilted reversible matrices.
\end{abstract}
\section{Introduction}
In this work, we present an extension of the standard stochastic matrix by introducing a new type of matrix called Tilted matrix. A Tilted matrix is a normalization of a non-stochastic matrix that makes it stochastic, providing a more flexible and powerful tool for modeling probabilistic transitions in Markov Decision Processes (MDPs). We study the properties of products of Tilted matrices and compare them with products of standard stochastic matrices.

In addition, we derive new results on reversible matrices. Reversible matrices are a special type of stochastic matrices that possess certain desirable properties, such as detailed balance and ergodicity. We study the properties of products of reversible matrices and compare them with products of Tilted matrices. We demonstrate that the combination of these results provides a more comprehensive understanding of the behavior of MDPs. The new results on reversible matrices provide additional insights into the behavior of MDPs, especially in cases where detailed balance and ergodicity are important considerations. Furthermore, by combining the results on Tilted and reversible matrices, we provide a more complete picture of the potential benefits of using different types of matrices in MDP applications. 

Our main and final result is on the convergence rate of the product of Tilted reversible matrices. One example for the application of such a result is to derive the decay rate of the variance of policy gradient algorithms \cite{dalal2023softtreemax}.

\section{Definitions}
 We begin with introducing a generalization of stochastic matrices to the non-square matrix case. Let $\ones_n$ the $n$-dimensional all-ones vector. 
\begin{definition}[Rectangular Stochastic Matrix]
Let $P^{m \times n}$. The matrix $P$ is rectangular stochastic if all its entries are non-negative and $P \ones_n = \ones_m.$
\end{definition}

Next, we introduce our definition of a Tilted matrix.
\begin{definition}[Tilted Matrix]
Let $u \in \bR_+^n$ be a strictly positive vector and $A \in \bR_{\geq 0}^{m \times n}$ be a non-negative matrix with at least one non-zero entry in every row. Denote by $D(u)$ the diagonal matrix with $u$ in its diagonal. We call the matrix $D^{-1}(Au) A D(u)$ the $u$-Tilted matrix of $A$.
\end{definition}

\section{Results for Tilted Matrices}
We now show several properties of rectangular stochastic matrices.
\begin{proposition}
A Tilted matrix is rectangular stochastic.
\end{proposition}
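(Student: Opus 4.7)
The plan is to verify the two defining conditions of a rectangular stochastic matrix directly from the definition of the Tilted matrix $T := D^{-1}(Au)\,A\,D(u)$.

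First I would check well-definedness: since $u \in \bR_+^n$ is strictly positive and every row of $A$ has at least one non-zero entry, the vector $Au$ has strictly positive entries, so $D(Au)$ is invertible and $D^{-1}(Au)$ has strictly positive diagonal. This makes $T$ well-defined.

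Next, for non-negativity, I would observe that $T$ is a product of a non-negative matrix $A$ sandwiched between two diagonal matrices with strictly positive diagonals, so each entry of $T$ is a non-negative scalar times an entry of $A$, hence non-negative.

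Finally, for the row-sum condition $T\ones_n = \ones_m$, I would chain the three multiplications from right to left: $D(u)\ones_n = u$, then $A u$ is just the vector $Au$, and then left-multiplying by $D^{-1}(Au)$ produces the vector whose $i$-th entry is $(Au)_i / (Au)_i = 1$, i.e.\ $\ones_m$. No step here is a real obstacle; the only subtlety worth flagging is the hypothesis that every row of $A$ has a non-zero entry, which is precisely what keeps $D^{-1}(Au)$ from being singular and is therefore essential rather than cosmetic.
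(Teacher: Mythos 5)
Your proof is correct and follows essentially the same route as the paper's: non-negativity from the signs of $u$ and $A$, and the row-sum condition via $D^{-1}(Au)AD(u)\ones_n = D^{-1}(Au)Au = \ones_m$. The paper also notes, as you do, that $Au$ is strictly positive because every row of $A$ has a non-zero entry and $u>0$.
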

\begin{proof}
Consider a $u$-Tilted matrix of $A$. Since all entries in $u$ and $A$ are non-negative, so do the entries in $D^{-1}(Au) A D(u).$ Lastly, the sum of every row is $1$ because 
\begin{equation*}
D^{-1}(Au) A D(u) \ones_n = D^{-1}(Au) A u = \ones_m.
\end{equation*}
Note that $Au$ has no zero entries since $A$ has one non-negative entry per row and $u$ is strictly positive.
\end{proof}

\begin{proposition}
        Let $P \in \bR^{m\times m}$ be a square stochastic matrix. Then $P$ is irreducible if and only if $u$-Tilted $P$ for any $u$ is also irreducible. $P$ is aperiodic if and if $u$-Tilted $P$ for any $u$ is aperiodic.
\end{proposition}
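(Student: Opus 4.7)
My plan is to reduce both equivalences to the single observation that $u$-Tilting preserves the zero/non-zero pattern of $P$, and that both irreducibility and aperiodicity depend only on this pattern (equivalently, on the directed graph of the matrix).

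First, I would write the entries of $P' := D^{-1}(Pu)\, P\, D(u)$ explicitly as $P'_{ij} = u_j P_{ij}/(Pu)_i$. Since $u$ is strictly positive and $P$ is stochastic, $Pu$ has strictly positive entries, so every factor $u_j$ and $1/(Pu)_i$ is a strictly positive finite scalar. Consequently $P'_{ij} > 0$ if and only if $P_{ij} > 0$, and $P$ and $P'$ induce exactly the same directed graph on $\{1,\dots,m\}$.

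Next, I would extend this equivalence to all powers by induction on $k$: $((P')^k)_{ij} > 0$ iff $(P^k)_{ij} > 0$. The inductive step expands $((P')^{k+1})_{ij} = \sum_\ell ((P')^k)_{i\ell}\, P'_{\ell j}$ as a sum of non-negative terms and uses the inductive hypothesis together with the base case to conclude that some summand is positive iff the analogous summand for $P^{k+1}$ is positive. Since the equivalence $P'_{ij} > 0 \iff P_{ij} > 0$ is symmetric, no separate argument is needed to invert the Tilting.

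With this in place, both claims follow at once. Irreducibility of $P$ is the statement that for every pair $(i,j)$ there exists some $k$ with $(P^k)_{ij} > 0$, and the preserved zero pattern of all powers transfers this property between $P$ and $P'$ in both directions. Aperiodicity is the statement $\gcd\{k \geq 1 : (P^k)_{ii} > 0\} = 1$, and the set appearing in this gcd is identical for $P$ and $P'$, so aperiodicity is likewise preserved in both directions.

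There is no substantive obstacle; the whole proof rests on the observation that the diagonal rescalings used in Tilting have strictly positive diagonal entries and therefore cannot create or destroy zeros, and the only mildly non-trivial step is the routine induction verifying that this pattern preservation propagates to all matrix powers.
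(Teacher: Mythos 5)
Your proposal is correct and follows essentially the same route as the paper: both arguments reduce to the observation that Tilting multiplies each entry $P_{ij}$ by the strictly positive scalar $u_j/(Pu)_i$, hence preserves the zero pattern, and that irreducibility and aperiodicity depend only on that pattern. The paper phrases the first step via the Hadamard-product identity $D(y)AD(x)=yx^\top\circ A$ and leaves the graph-theoretic step implicit, whereas you verify the entrywise formula and the propagation to powers directly, but the substance is identical.
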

\begin{proof}
To proof relies on the combination of the two following arguments. First, we highlight the following relation from \cite{styan1973hadamard}:
\begin{equation}
\label{eq: hadamard}
D(y)A D(x) = yx^\top \circ A,
\end{equation}
where $\circ$ denotes the element-wise product also known as the Hadamard product. 
Thus, $[T(P)]_{i,j}$ equals zero if and only if $[P]_{i,j}$ equals zero. 

Second, irreducibility and aperiodicity depend only on the location of zero entries.
\end{proof}
The last result in this section leverages the above properties of Tilted matrices. Specifically, we show how to write a product of non-negative matrices as a product of a single diagonal and stochastic matrix.
\begin{theorem}[Normalization of matrix product to a stochastic matrix]
Let $\{A_i \in \bR^{m\times m}\}_{i=1}^n$ be $n$ square non-negative matrices with at least one non zero entry in every row, and let $\{u_i \in \bR^{ m}\}_{i=1}^{n}$ be $n$ strictly positive vectors. Then the product
\begin{equation*}
\prod_{i=1}^n A_i D(u_i)
\end{equation*}
can be written as $D(u_{1:n}) P_{1:n}$ where $P_{1:n}\in \bR^{m\times m}$ is a stochastic matrix and $u_{1:n} \in \bR^{m}$ is a positive vector.
\end{theorem}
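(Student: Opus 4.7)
The plan is to define both $u_{1:n}$ and $P_{1:n}$ explicitly and then verify the required properties, avoiding any induction on the product structure. Specifically, I would set
\[
u_{1:n} \;=\; \Bigl(\prod_{i=1}^{n} A_i D(u_i)\Bigr)\ones_m,
\qquad
P_{1:n} \;=\; D^{-1}(u_{1:n})\prod_{i=1}^{n} A_i D(u_i).
\]
The identity $D(u_{1:n}) P_{1:n} = \prod_{i=1}^{n} A_i D(u_i)$ then holds by cancellation, so it only remains to check that (i) $u_{1:n}$ is strictly positive (so that $D^{-1}(u_{1:n})$ makes sense), and (ii) $P_{1:n}$ is stochastic.

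For (i), I would process the vector $\bigl(\prod_{i=1}^n A_i D(u_i)\bigr)\ones_m$ from the right, in the order $\ones_m \mapsto D(u_n)\ones_m = u_n \mapsto A_n u_n \mapsto D(u_{n-1}) A_n u_n \mapsto \dots$, and argue at each step that strict positivity is preserved. Multiplication by a $D(u_i)$ with $u_i>0$ preserves strict positivity entrywise. Multiplication by $A_i$ preserves strict positivity because, as observed in the proof of Proposition~1, $A_i$ has at least one non-zero entry per row, so $A_i v>0$ whenever $v>0$. Iterating gives $u_{1:n}>0$.

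For (ii), non-negativity of $P_{1:n}$ is immediate since every factor in the product is non-negative and $D^{-1}(u_{1:n})$ is a positive diagonal matrix. The row-sum condition follows directly from the definition:
\[
P_{1:n}\ones_m \;=\; D^{-1}(u_{1:n})\Bigl(\prod_{i=1}^{n} A_i D(u_i)\Bigr)\ones_m \;=\; D^{-1}(u_{1:n})\, u_{1:n} \;=\; \ones_m.
\]

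The only non-routine step is (i), establishing that $u_{1:n}$ has no zero coordinates; the rest is algebraic unwinding. One could alternatively give an inductive proof that repeatedly uses $A D(u) = D(Au)\cdot \bigl[D^{-1}(Au) A D(u)\bigr]$ to turn each factor into (diagonal)$\times$(Tilted stochastic) and then merges them, invoking Proposition~1 at each step; but the direct construction above is more economical and isolates the one real hypothesis being used, namely that every $A_i$ has a non-zero entry in each row.
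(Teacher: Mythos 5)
Your proof is correct, but it takes a genuinely different route from the paper. The paper proceeds by induction on $n$: it peels off one factor at a time, rewrites each $A_{n+1}D(u_{n+1})$ as $D(A_{n+1}u_{n+1})$ times a Tilted (hence stochastic) matrix, and then pushes the new diagonal leftward through the previously accumulated stochastic matrix, arriving at the recursion $u_{1:n+1}=u_{1:n}\circ P_{1:n}A_{n+1}u_{n+1}$ and an expression for $P_{1:n}$ as a product of stochastic matrices. Your construction bypasses all of this by observing that the normalizing vector is forced: if $\prod_i A_iD(u_i)=D(u_{1:n})P_{1:n}$ with $P_{1:n}$ stochastic, then $u_{1:n}$ must be the row-sum vector of the product, so you simply define it that way and check positivity of the row sums directly (your right-to-left propagation of strict positivity is exactly the right argument, and it is the only place the ``non-zero entry in every row'' hypothesis is used). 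Your version is shorter and isolates the essential hypothesis more cleanly; what the paper's inductive version buys in exchange is additional structure --- an explicit factorization of $P_{1:n}$ into Tilted stochastic factors and a closed-form recursion for $u_{1:n}$ --- which is in the spirit of the rest of the paper even though the theorem statement does not require it and the later sections do not rely on it. Both constructions necessarily produce the same pair $(u_{1:n},P_{1:n})$, since the decomposition into a positive diagonal times a stochastic matrix is unique.
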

\begin{proof}
We prove by induction. For $n=1,$
\begin{equation*}
A_1 D(u_1) = D(A_1 u_1) D^{-1}(A_1 u_1) A_1 D(u_1) = D(u_{1:1}) P_{1:1},
\end{equation*}
where $u_{1:1} = A_1 u_1$ and $P_{1:1}= D^{-1}(A_1 u_1) A_1 D(u_1)$ is a $u_1-$Tilted $A_1$ matrix, and hence also stochastic.

		Now assume the claim holds up to $i=n.$ We shall prove it for $n+1$:
		\begin{align}
			\prod_{i=1}^{n+1} A_i D(u_i) &= \prod_{i=1}^{n} A_i D(u_i) A_{n+1} D(u_{n+1}) \nonumber\\ 
			&= D(u_{1:n}) P_{1:n} A_{n+1} D(u_{n+1}) \label{eq: step} \\
			& = D(u_{1:n}) P_{1:n} D(A_{n+1} u_{n+1}) D^{-1}(A_{n+1} u_{n+1}) A_{n+1} D(u_{n+1}) \nonumber \\
			& = D(u_{1:n}) P_{1:n} D(A_{n+1} u_{n+1}) P_{n+1} \label{eq: def P_n+1}\\
			&= D(u_{1:n}) D(P_{1:n} A_{n+1} u_{n+1}) D^{-1}(P_{1:n} A_{n+1} u_{n+1}) P_{1:n} D(A_{n+1} u_{n+1})  P_{n+1} \nonumber\\
			&= D(u_{1:n+1}) P_{1:n+1}, \label{eq: finish}
		\end{align}
		where \eqref{eq: step} follows from the induction step, \eqref{eq: def P_n+1} follows from defining the Tilted matrix $P_{n+1}=D^{-1}(A_{n+1} u_{n+1}) A_{n+1} D(u_{n+1}),$ and \eqref{eq: finish} follows from defining
				\begin{equation*}
			u_{1:n+1} = u_{1:n} \circ P_{1:n} A_{n+1} u_{n+1},
		\end{equation*}
	and
		\begin{equation*}
			P_{1:n+1}=\left(D^{-1}(P_{1:n} A_{n+1} u_{n+1}) P_{1:n} D(A_{n+1} u_{n+1})\right)  P_{n+1} ,
		\end{equation*}
		which is a product of two stochastic matrices, i.e. a stochastic matrix by itself.

	\end{proof}
	
	\section{Results for Reversible Matrices}
	In this section, we reveal intriguing relations between reversible and Tilted matrices.
	
	\begin{definition}
		A stochastic matrix $P \in \bR^{m \times m}$ with stationary distribution $\mu_P$ is called reversible if
		\begin{equation*}
			P=D^{-1}(\mu_P) P^\top D(\mu_P).
		\end{equation*}
	\end{definition}
	\textbf{Note:} for any stochastic matrix $P,$ the above matrix is also stochastic:
	\begin{equation*}
		D^{-1}(\mu_P) P^\top D(\mu_P) \ones = D^{-1}(\mu_P) P^\top \mu_P = D^{-1}(\mu_P) \mu_P = \ones.
	\end{equation*}
	
	\begin{proposition}
 \label{prop: Tilted is reversible}
		Let $u \in \bR^m_+$ be a strictly positive vector and  $P$ be a stochastic reversible matrix. Then, the $u$-Tilted matrix $U=D^{-1}(Pu)PD(u)$ is also reversible, and its stationary distribution is
		\begin{equation*}
			\mu_U = \alpha \cdot u \circ Pu \circ \mu_P,
		\end{equation*}
		where $\alpha = \left(\ones^\top D(u) D(Pu) D(\mu_P) \ones \right) ^{-1}.$
	\end{proposition}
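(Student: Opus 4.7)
The plan is to verify directly that the claimed $\mu_U$ is (i) a probability vector and (ii) satisfies detailed balance with $U$, which together imply that $\mu_U$ is a stationary distribution of $U$ and that $U$ is reversible.

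For (i), strict positivity of $\mu_U$ is immediate since $u$, $Pu$, and $\mu_P$ are all strictly positive. Imposing $\ones^\top \mu_U = 1$ then forces exactly $\alpha^{-1} = \ones^\top D(u) D(Pu) D(\mu_P) \ones$, which is the stated constant.

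For (ii), I would write reversibility as the matrix identity $U = D^{-1}(\mu_U) U^\top D(\mu_U)$ and substitute $U = D^{-1}(Pu) P D(u)$ together with $D(\mu_U) = \alpha\, D(u) D(Pu) D(\mu_P)$. Because diagonal matrices commute with one another, the scalar $\alpha$ cancels and the outer factors $D(u), D^{-1}(u)$ as well as $D(Pu), D^{-1}(Pu)$ collapse, leaving
\begin{equation*}
D^{-1}(\mu_U) U^\top D(\mu_U) \;=\; D^{-1}(Pu)\bigl[D^{-1}(\mu_P) P^\top D(\mu_P)\bigr] D(u).
\end{equation*}
The bracketed factor equals $P$ by reversibility of $P$, recovering $U$ exactly. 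Detailed balance together with $\ones^\top \mu_U = 1$ then yields $\mu_U^\top U = \mu_U^\top$, so $\mu_U$ is indeed the stationary distribution.

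The main obstacle is really just \emph{guessing} the correct form of $\mu_U$ up front; the subsequent verification is bookkeeping that exploits the commutation of diagonal matrices, much in the same spirit as equation \eqref{eq: hadamard}. The form $u \circ Pu \circ \mu_P$ is, however, natural in hindsight: $\mu_P$ carries the reversibility inherited from $P$, the factor $u$ compensates for the right-multiplication by $D(u)$ in $U$, and the factor $Pu$ compensates for the left normalization $D^{-1}(Pu)$. One can alternatively verify detailed balance entrywise by writing $U_{ij} = u_j P_{ij}/(Pu)_i$ and observing that $\mu_U(i) U_{ij} = \alpha\, u_i u_j\, \mu_P(i) P_{ij}$, which is symmetric in $(i,j)$ by reversibility of $P$; I would present whichever form the authors find more transparent.
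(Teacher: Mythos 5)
Your proposal is correct and follows essentially the same route as the paper: the central step in both is substituting $D(\mu_U) = \alpha\, D(u) D(Pu) D(\mu_P)$ into $D^{-1}(\mu_U) U^\top D(\mu_U)$ and cancelling commuting diagonal factors until reversibility of $P$ recovers $U$. The only difference is that the paper verifies $\mu_U^\top U = \mu_U^\top$ by a separate direct computation, whereas you derive stationarity from detailed balance plus stochasticity of $U$ (and check normalization explicitly) --- a small but legitimate economy.
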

	\begin{proof}
		First, we show that $\mu_U$ is the stationary distribution of $U$ by showing it is a left eigenvector of $U$ with eigenvalue $1$:
		\begin{align*}
			\mu_U^\top U &= \alpha \ones^\top D(u) D(Pu) D(\mu_P) D^{-1}(Pu)PD(u) \\
			&= \alpha \ones^\top D(u) D(\mu_P) PD(u) \\
			(P \textit{ is reversible})\quad\quad\quad &= \alpha \ones^\top D(u) P^\top D(\mu_P) D(u) \\
			&= \alpha u^\top P^\top D(\mu_P) D(u) \\
			&= \alpha \ones^\top D(Pu) D(\mu_P) D(u) \\
			&= \mu_U^\top.
		\end{align*}
		To show that $U$ is reversible, we now substitute $\mu_U$:
		\begin{align*}
			 D^{-1}(\mu_U) U^\top D(\mu_U) &= \alpha^{-1} D^{-1}(u)D^{-1}(Pu)D^{-1}(\mu_P) D(u) P^\top D^{-1}(Pv) \alpha D(u) D(Pu) D(\mu_P) \\ 
			&= D^{-1}(Pu)D^{-1}(\mu_P) P^\top D(u) D(\mu_P) \\
			(P \textit{ is reversible})\quad\quad\quad &= D^{-1}(Pu) P D(u) \\
			&= U.
		\end{align*}
	\end{proof}
\begin{proposition}
\label{prop: similarity}
    Let $P$ be a reversible matrix and $D(\mu_P)^{1/2} P D^{-1/2}(\mu_P)$ be similar to $P$. Then, $D(\mu_P)^{1/2} P D^{-1/2}(\mu_P)$ is symmetric and, consequently, all eigenvalues of $P$ are real.
\end{proposition}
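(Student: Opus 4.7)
The plan is to compute $S := D^{1/2}(\mu_P)\, P\, D^{-1/2}(\mu_P)$ explicitly and show $S^\top = S$ by inserting the reversibility identity. The similarity to $P$ is automatic (it is effected by the invertible matrix $D^{1/2}(\mu_P)$), so once symmetry is established the conclusion about real eigenvalues follows immediately from the spectral theorem combined with the fact that similar matrices share eigenvalues.

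Concretely, first I would write down $S^\top = D^{-1/2}(\mu_P)\, P^\top\, D^{1/2}(\mu_P)$, using that $D(\mu_P)$ and its powers are diagonal hence self-transpose. Next I would rearrange the reversibility relation $P = D^{-1}(\mu_P) P^\top D(\mu_P)$ to get $P^\top = D(\mu_P)\, P\, D^{-1}(\mu_P)$, and substitute this into the expression for $S^\top$. All the $D(\mu_P)^{\pm 1/2}$ factors commute with each other (they are all diagonal), so the product collapses: $S^\top = D^{-1/2}(\mu_P) D(\mu_P) P D^{-1}(\mu_P) D^{1/2}(\mu_P) = D^{1/2}(\mu_P) P D^{-1/2}(\mu_P) = S$.

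For the eigenvalue statement, I would invoke the spectral theorem: a real symmetric matrix has only real eigenvalues. Since $P$ and $S$ are related by the similarity $P = D^{-1/2}(\mu_P)\, S\, D^{1/2}(\mu_P)$, they have identical characteristic polynomials, hence identical spectra.

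There is essentially no obstacle. The only place one must be slightly careful is in ensuring $D^{1/2}(\mu_P)$ is well-defined and invertible, which holds because $\mu_P$ is a stationary distribution of a reversible (hence in particular irreducible, in the typical setting) stochastic matrix and is assumed strictly positive so that the entries $\mu_P(i)^{1/2}$ are all real and nonzero. If strict positivity of $\mu_P$ is not already implicit in the hypotheses, I would either add it as an assumption or note that on the support of $\mu_P$ the argument goes through unchanged.
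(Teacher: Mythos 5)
Your proposal is correct and follows essentially the same route as the paper: transpose $D^{1/2}(\mu_P) P D^{-1/2}(\mu_P)$, substitute the rearranged reversibility identity $P^\top = D(\mu_P) P D^{-1}(\mu_P)$, and collapse the commuting diagonal factors to recover symmetry, then conclude realness of the spectrum via similarity. Your added remark about strict positivity of $\mu_P$ (needed for $D^{-1/2}(\mu_P)$ to exist) is a reasonable precaution that the paper leaves implicit.
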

\begin{proof}
The similarity of $P$ to $D(\mu_P)^{1/2} P D^{-1/2}(\mu_P)$ follows because if we multiply the latter by $D^{-1/2}(\mu_P)$ from the left and by $D^{1/2}(\mu_P)$ from the right, we recover $P.$ The symmetry follows immediately from the definition of reversibility:
\begin{equation*}
\left[ D(\mu_P)^{1/2} P D^{-1/2}(\mu_P)\right]^\top = D^{-1/2}(\mu_P) P^\top D(\mu_P)^{1/2} = D(\mu_P)^{1/2} P D^{-1/2}(\mu_P).
\end{equation*}
\end{proof}

\begin{proposition}
    \label{lem:product2}
Let $u, v \in \bR^m_+$ be a strictly positive and let $P$ be a stochastic reversible matrix. Define the Tilted matrices $V=D^{-1}(Pv)PD(v)$ and $U=D^{-1}(Pu)PD(u),$ which are reversible. Then the stochastic matrix $W=UV$ is also reversible, and its stationary distribution $\mu_W$ holds:
\begin{equation*}
\mu_W = \alpha \cdot Pu \circ \mu_P \circ v,
\end{equation*}
with $\alpha = \left( \ones^\top D(Pu) D(\mu_P) D(v) \ones\right)^{-1}.$ Moreover, $W$ is similar to a positive semi-definite matrix; subsequently, all of its eigenvalues are real and positive.
\end{proposition}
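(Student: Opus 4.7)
The proposition has two parts: (i) $W=UV$ is reversible with stationary distribution $\mu_W=\alpha\cdot Pu\circ\mu_P\circ v$, and (ii) $W$ is similar to a PSD matrix, from which reality and non-negativity of the spectrum follow.

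For (i), the plan is to verify detailed balance directly, i.e., $D(\mu_W)W=W^\tr D(\mu_W)$, which simultaneously implies reversibility and stationarity of $\mu_W$ (the constant $\alpha$ is forced by the requirement $\mu_W^\tr\ones=1$). Writing $D(\mu_W)=\alpha D(Pu)D(\mu_P)D(v)$ and expanding $W=D^{-1}(Pu)PD(u)D^{-1}(Pv)PD(v)$, the factors $D(Pu)$ and $D^{-1}(Pu)$ cancel on the left, and on the right $W^\tr D(\mu_W)$ admits a parallel simplification after expanding $W^\tr=D(v)P^\tr D^{-1}(Pv)D(u)P^\tr D^{-1}(Pu)$. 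Two applications of the reversibility identity $D(\mu_P)P=P^\tr D(\mu_P)$ for $P$, together with free reordering of diagonal matrices, collapse both sides to the common expression
\begin{equation*}
\alpha\, D(v)\,P^\tr D^{-1}(Pv)D(u)D(\mu_P)\,P\, D(v).
\end{equation*}

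For (ii), I would combine Proposition~\ref{prop: similarity} with a symmetric factorization of $P$. By (i), $W$ is reversible, so Proposition~\ref{prop: similarity} already gives that $\tilde W:=D^{1/2}(\mu_W)WD^{-1/2}(\mu_W)$ is symmetric; it remains to show $\tilde W\succeq 0$. Applying Proposition~\ref{prop: similarity} to $P$, write $P=D^{-1/2}(\mu_P)SD^{1/2}(\mu_P)$ with $S$ symmetric. Substituting this for both copies of $P$ in $W$ and conjugating by $D^{\pm1/2}(\mu_W)=\alpha^{\pm1/2}D^{\pm1/2}(Pu)D^{\pm1/2}(\mu_P)D^{\pm1/2}(v)$, the $D^{\pm1/2}(\mu_P)$ factors coming from neighbouring $P$'s cancel, and the computation should reduce cleanly to
\begin{equation*}
\tilde W \;=\; C\,\bigl(S\,D(u/Pv)\,S\bigr)\,C, \qquad C:=D^{1/2}(v)D^{-1/2}(Pu),
\end{equation*}
where $u/Pv$ denotes entrywise division (which is strictly positive). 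Since $S^\tr=S$, the inner matrix factorises as $\bigl(D^{1/2}(u/Pv)S\bigr)^\tr\bigl(D^{1/2}(u/Pv)S\bigr)\succeq0$, and conjugation by the real diagonal $C$ is a congruence, preserving positive semi-definiteness. Reality and non-negativity of the eigenvalues of $W$ then follow from similarity.

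The main obstacle is the diagonal-matrix bookkeeping in step (ii); the crucial simplification is that every $D^{\pm1/2}(\mu_P)$ introduced either by the outer conjugation or by the factorisation of $P$ pairs off with a matching inverse sitting on an adjacent $P$, leaving a clean sandwich $S\,D(u/Pv)\,S$ in the middle. Step (i) is a routine but careful symmetric manipulation, and step (ii) actually reproves (i) as a byproduct (symmetry of $\tilde W$ implies reversibility), so in the writeup one may even organise the argument to do (ii) first and extract reversibility from it.
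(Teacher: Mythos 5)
Your proposal is correct and follows essentially the same route as the paper: part (i) is the same direct verification of detailed balance $D(\mu_W)W = W^\top D(\mu_W)$ using two applications of reversibility of $P$, and part (ii) conjugates by the same diagonal matrix $D^{1/2}(Pu)D^{1/2}(\mu_P)D^{1/2}(v)$; your sandwich $C\,S\,D(u/Pv)\,S\,C$ with $S = D^{1/2}(\mu_P)PD^{-1/2}(\mu_P)$ is exactly the paper's $X^\top X$ after substituting back for $S$. One small point in your favor: you correctly claim only non-negativity of the spectrum, which is all that positive semi-definiteness of $X^\top X$ yields, whereas the statement (and the paper) asserts positivity without justifying invertibility of $X$.
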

\begin{proof}
First, $V$ and $U$ are reversible according to Proposition~\ref{prop: Tilted is reversible}. Next, we show that their product $W$ is also reversible:
\begin{align*}
D(\mu_W) W &= \alpha D(Pu)D(\mu_P)D(v) \; D^{-1}(Pu)PD(u) \; D^{-1}(Pv)PD(v) \\
&= \alpha D(v) D(\mu_P) PD(u) \; D^{-1}(Pv)PD(v) \\
&= \alpha D(v) P^\top D(\mu_P) D(u)D^{-1}(Pv)PD(v) \\
&= \alpha D(v) P^\top D^{-1}(Pv) D(u)P^\top D(\mu_P)D(v) \\
&= \alpha V^\top D(u)P^\top D(\mu_P)D(v) D(Pu) D^{-1}(Pu) \\ 
&= \alpha V^\top U^\top D(\mu_P)D(v) D(Pu) \\
&= W^\top D(\mu_W).
\end{align*}
Finally, we show that $W$ is similar to $X^\top X$ for some matrix $X$:
\begin{align*}
D^{1/2}(Pu) & D^{1/2}(\mu_P) D^{1/2}(v) W D^{-1/2}(Pu) D^{-1/2}(\mu_P) D^{-1/2}(v) \\
&= D^{1/2}(Pu) D^{1/2}(\mu_P) D^{1/2}(v) D^{-1}(Pu)PD(u)D^{-1}(Pv) P D(v) D^{-1/2}(Pu) D^{-1/2}(\mu_P) D^{-1/2}(v) \\
&= D^{-1/2}(Pu) D^{1/2}(\mu_P) D^{1/2}(v) D^{-1}(\mu_P) P^\top D(\mu_P) D(u)D^{-1}(Pv) P D^{1/2}(v) D^{-1/2}(Pu) D^{-1/2}(\mu_P) \\
&= X^\top X
\end{align*}
Where:
\begin{equation*}
X=D^{1/2}(\mu_P) D^{1/2}(u) D^{-1/2} (Pv) P D^{1/2}(v) D^{-1/2}(Pu)D^{-1/2}(\mu_P).
\end{equation*}
\end{proof}
We conjecture that Proposition~\ref{lem:product2} can be generalized to a product of $n$ Tilted matrices.
\begin{conjecture}
Let $\{u_i\}_{i=1}^n$ be strictly positive vectors and let $P$ be a stochastic reversible matrix. Then the product of the $u_i$-Tilted matrices
\begin{equation*}
\prod_{i=1}^n D^{-1}(P u_i) P D(u_i)
\end{equation*}
is reversible with stationary distribution that can be readily calculated using $u_i$ and $\mu_P$.
\end{conjecture}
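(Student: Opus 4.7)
My plan is to attempt induction on $n$, with Proposition~\ref{lem:product2} supplying the base case $n=2$. In the inductive step I would write $W_n = W_{n-1} \, U_n$, where $W_{n-1}$ is the product of the first $n-1$ Tilted matrices (reversible by the inductive hypothesis) and $U_n = D^{-1}(Pu_n) P D(u_n)$ is a single Tilted matrix (reversible by Proposition~\ref{prop: Tilted is reversible}). Since a product of two reversible matrices is not in general reversible, the argument cannot proceed abstractly and must exploit the fact that every $U_i$ is built from the same underlying reversible $P$.

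Mirroring the proof of Proposition~\ref{lem:product2}, I would first guess the form of $\mu_{W_n}$ by extrapolation from the known cases $\mu_{W_1} \propto u_1 \circ Pu_1 \circ \mu_P$ and $\mu_{W_2} \propto Pu_1 \circ \mu_P \circ u_2$, and then verify directly that $D(\mu_{W_n}) W_n = W_n^\top D(\mu_{W_n})$. The central algebraic tool is the reversibility identity $D(\mu_P) P = P^\top D(\mu_P)$, which lets one push $D(\mu_P)$ from left to right across each factor of $P$ in the expansion of $W_n$, converting the $P$'s into $P^\top$'s one by one. Since the remaining factors $D(u_i)$ and $D^{-1}(Pu_i)$ are diagonal and commute freely with each other and with $D(\mu_P)$, $n$ applications of the identity reverse the order of the $P$ factors in $W_n$ up to conjugation by a diagonal matrix, which is precisely the symmetry condition we need.

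The main obstacle will be identifying the correct form of $\mu_{W_n}$ for $n \geq 3$. The naive extrapolation $\mu_{W_n} \propto Pu_1 \circ u_n \circ \mu_P$ is likely insufficient, because after the flip, the interior diagonals $D(u_i) D^{-1}(Pu_{i+1})$ between consecutive $P^\top$ factors do not match those in the reversed product unless a corrective factor involving $u_i$ and $Pu_i$ at every intermediate level is present in $\mu_{W_n}$. I would pin this correction down by first working out the $n=3$ case symbolically, matching both sides entrywise to isolate the needed correction, and then guessing a closed-form formula and verifying it by induction. As a parallel check I would try to exhibit an explicit diagonal $H$ such that $H W_n H^{-1}$ is symmetric, following the similarity argument at the end of the proof of Proposition~\ref{lem:product2}; by Proposition~\ref{prop: similarity} this $H$ would determine $\mu_{W_n}$ up to normalization, giving an independent verification of the guessed formula.
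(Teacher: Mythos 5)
First, a point of reference: the paper offers no proof of this statement --- it is explicitly a conjecture, accompanied only by the one-sentence suggestion of ``switching the order of the $u_i$'' via reversibility, which is essentially the plan you describe. So there is no paper proof to compare against; the real question is whether your plan can be completed, and it cannot. The obstacle you correctly flag --- that after pushing $D(\mu_P)$ through the $n$ copies of $P$ the interior diagonals come out in reversed order --- is not repairable by any corrective factor in $\mu_{W_n}$, because the conjecture is false for $n\ge 3$. Writing $D(\mu)W_n = D(b)\,P^\top D(c_1)P^\top\cdots D(c_{n-1})P^\top D(\mu_P\circ u_n)$ with $c_i=u_i\circ (Pu_{i+1})^{-1}$, symmetry would force $P^\top D(c_1)\cdots D(c_{n-1})P^\top$ and its reversal $P^\top D(c_{n-1})\cdots D(c_1)P^\top$ to be conjugate by a diagonal matrix. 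For $n=2$ there is a single interior diagonal, the two products coincide, and this is exactly why Proposition~\ref{lem:product2} goes through; for $n\ge 3$ they differ, and a diagonal conjugacy between them amounts to a Kolmogorov-type cycle condition that generic $P$ and $u_i$ violate.

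Concretely, take
\begin{equation*}
P=\begin{pmatrix}1/2&1/2&0\\1/2&0&1/2\\0&1/2&1/2\end{pmatrix},\qquad u_1=(1,1,2)^\top,\qquad u_2=u_3=\ones_3 ,
\end{equation*}
so that $P$ is symmetric, hence reversible with uniform $\mu_P$. Then $U_2=U_3=P$, $Pu_1=(1,3/2,3/2)^\top$, and
\begin{equation*}
W_3=U_1P^2=\begin{pmatrix}1/2&1/2&0\\1/3&0&2/3\\0&1/3&2/3\end{pmatrix}\begin{pmatrix}1/2&1/4&1/4\\1/4&1/2&1/4\\1/4&1/4&1/2\end{pmatrix}=\begin{pmatrix}3/8&3/8&1/4\\1/3&1/4&5/12\\1/4&1/3&5/12\end{pmatrix}.
\end{equation*}
This matrix is strictly positive, hence irreducible with a unique stationary distribution, and Kolmogorov's cycle criterion gives $W_{12}W_{23}W_{31}=\tfrac38\cdot\tfrac5{12}\cdot\tfrac14=\tfrac5{128}$ versus $W_{13}W_{32}W_{21}=\tfrac14\cdot\tfrac13\cdot\tfrac13=\tfrac1{36}$. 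Since these differ, no positive vector $\mu$ makes $D(\mu)W_3$ symmetric, so $W_3$ is not reversible and no formula for $\mu_{W_3}$ exists to be found. Your parallel check (exhibiting a diagonal $H$ with $HW_nH^{-1}$ symmetric) would have exposed this, so I would recommend running it on a $3\times 3$ example before investing in the symbolic $n=3$ computation; the productive direction is then to look for additional hypotheses (e.g., all $u_i$ equal, or suitable commutation relations among the $D(c_i)$ and $P$) under which the reversed interior product agrees with the original.
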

The proof (if correct) can be obtained by starting with one side of the reversibility equation, and then using Proposition~\ref{lem:product2} and reversibility properties to switch the order of the $u_i$ occurrence.

Now we address the question whether two given stochastic matrices are Tilted versions of one another. We answer it by finding another condition for the two matrices to fulfill. 
\begin{proposition}
         Let $P_1, P_2$ be stochastic matrices. Then there exists a strictly positive vector $u \in \bR^m+$ such that $P_1=D^{-1}(P_2 u) P_2 D(u)$ if and only if the matrix formed by coordinate-wise division, $M[i,j]=\frac{P_1[i,j]}{P_2[i,j]},$ is a rank-1 matrix.
\end{proposition}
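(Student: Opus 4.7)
The plan is to prove each direction by a direct entry-wise calculation resting on the Hadamard reformulation \eqref{eq: hadamard}. In both implications the point is to recognize that the entries of $M$ factor into a term depending only on the row index $i$ times a term depending only on the column index $j$, which is precisely the definition of rank one.

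For the forward direction, I would assume $P_1 = D^{-1}(P_2 u) P_2 D(u)$. Writing this entry-wise via \eqref{eq: hadamard} gives $P_1[i,j] = u_j P_2[i,j]/(P_2 u)_i$, so
\[ M[i,j] = \frac{P_1[i,j]}{P_2[i,j]} = \frac{u_j}{(P_2 u)_i}, \]
which is the outer product $\bigl(1/(P_2 u)\bigr)\, u^\tr$ and is therefore rank one.

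For the reverse direction, assume $M = a b^\tr$ for some vectors $a, b \in \bR^m$. Then $P_1[i,j] = a_i b_j P_2[i,j]$; summing over $j$ and using stochasticity of $P_1$ yields $a_i (P_2 b)_i = 1$, so $a_i = 1/(P_2 b)_i$. Setting $u := b$ then recovers exactly $P_1 = D^{-1}(P_2 u) P_2 D(u)$, which is the desired Tilted form.

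The only subtle point, and the place I expect the main obstacle, is the treatment of zero entries. The Tilted operation preserves the zero pattern (again by \eqref{eq: hadamard}), so the statement implicitly requires $P_1$ and $P_2$ to share their support; $M$ must then be interpreted on this common support (with the usual convention that $0/0$ is undefined elsewhere), and the rank-one hypothesis should be read as saying $M$ extends to a rank-one matrix on the full index set. In the reverse direction one further needs $u$ strictly positive, which can be arranged by simultaneously flipping the signs of $a$ and $b$: this is possible because all nonzero entries of $M$ are strictly positive, forcing consistent sign patterns on $a$ and $b$.
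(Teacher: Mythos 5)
Your proof is correct and rests on the same idea as the paper's: the Hadamard identity \eqref{eq: hadamard} turns the rank-one condition on $M$ into the factorization $P_1 = D(a)P_2D(b)$, and row-stochasticity of $P_1$ then forces $a = 1/(P_2 b)$. In fact your write-up is more complete than the paper's, which only spells out the ``rank-1 $\Rightarrow$ Tilted'' direction and does not address the zero-entry support issue or the sign normalization of the factors that you correctly flag.
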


\begin{proof}
If $\frac{P_1[i,j]}{P_2[i,j]}$ is a rank-1 matrix, we can use \eqref{eq: hadamard} to rewrite it as follows for some vectors $u,v \in \bR_+$:
\begin{equation*}
P_1 = P_2 \circ uv^\top = D(u) P_2 D(v).
\end{equation*}
Since $P_1$ is stochastic, this only happens if $D(u)=D^{-1}(P_2 v)$.
\end{proof}

\subsection{Bounds on the Second Eigenvalue}
The second largest eigenvalue of a stochastic matrix is known to be informative as it depicts the mixing time, i.e. the rate of convergence to the stationary distribution when traversing the associated Markov chain. We now provide several results on the second eigenvalue.

\begin{proposition} \label{prop: Tilted 2nd ev}
Let $P \in \bR^{m \times m}$ be a stochastic reversible matrix and $u \in \bR^m_+$ a strictly positive vector. Define the $u$-Tilted matrix $U = D^{-1}(Pu)PD(u) $. Then $|\lambda_2(U)| \leq |\lambda_2(P)| \left(\frac{\max_i u[i]}{\min_i u[i]}\right)^2 $
\end{proposition}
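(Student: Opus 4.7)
The plan is to reduce to a spectral inequality for symmetric matrices by symmetrizing both $P$ and $U$ through reversibility, and then to apply a congruence eigenvalue bound.

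By Proposition~\ref{prop: Tilted is reversible} the matrix $U$ is itself reversible with stationary distribution $\mu_U = \alpha\,u\circ Pu\circ \mu_P$, and so by Proposition~\ref{prop: similarity} both $P$ and $U$ are similar to the symmetric matrices
\begin{equation*}
    S_P := D^{1/2}(\mu_P)\,P\,D^{-1/2}(\mu_P), \qquad S_U := D^{1/2}(\mu_U)\,U\,D^{-1/2}(\mu_U),
\end{equation*}
whose eigenvalues coincide with those of $P$ and $U$ respectively. It therefore suffices to compare $\lambda_2(S_U)$ with $\lambda_2(S_P)$.

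The pivotal step is to derive the clean congruence identity
\begin{equation*}
    S_U \,=\, E\,S_P\,E, \qquad E := D^{1/2}(u)\,D^{-1/2}(Pu),
\end{equation*}
which I would verify by substituting $D^{1/2}(\mu_U)=\sqrt{\alpha}\,D^{1/2}(u)\,D^{1/2}(Pu)\,D^{1/2}(\mu_P)$ into the definition of $S_U$, invoking the reversibility identity $D^{1/2}(\mu_P)\,P = S_P\,D^{1/2}(\mu_P)$, and using that the surrounding diagonal matrices commute so that the $\mu_P$-factors cancel, leaving only $E$, $S_P$, and $E$.

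Given this identity, Ostrowski's theorem on symmetric congruences states that for each index $k$ there exists a positive $\theta_k \in [\lambda_{\min}(E^2),\lambda_{\max}(E^2)]$ with $\lambda_k(S_U) = \theta_k\,\lambda_k(S_P)$. Since the $i$-th diagonal entry of $E^2$ is $u[i]/(Pu)[i]$, and stochasticity of $P$ forces $(Pu)[i] \in [\min_j u[j], \max_j u[j]]$, we obtain $\lambda_{\max}(E^2) \leq \max_i u[i]/\min_i u[i]$. Taking $k=2$ yields
\begin{equation*}
    |\lambda_2(U)| \;=\; |\lambda_2(S_U)| \;\leq\; \frac{\max_i u[i]}{\min_i u[i]}\,|\lambda_2(P)| \;\leq\; \left(\frac{\max_i u[i]}{\min_i u[i]}\right)^{\!2}|\lambda_2(P)|,
\end{equation*}
which is the stated claim, in fact with a spare factor of $\max u/\min u$ available as slack.

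I expect the main obstacle to be the algebraic bookkeeping in the derivation of $S_U = E S_P E$: three separate diagonal factors ($u$, $Pu$, and $\mu_P$) must be shuffled past the non-diagonal $P$, and one has to invoke the right-sided form of reversibility at exactly the right moment so that the $\mu_P$ contributions cancel cleanly. Once that identity is established, the eigenvalue comparison is an essentially routine application of Ostrowski (or, equivalently, of Courant--Fischer combined with the elementary bound $\lambda_{\min}(E^{-2})\,\|y\|^2 \leq y^\top E^{-2} y$).
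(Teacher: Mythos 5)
Your proof is correct, but it takes a genuinely different --- and in fact sharper --- route than the paper's. The paper bounds $|\lambda_2(U)|$ by writing it as $|\lambda_1(U)\lambda_2(U)|$, passing to a similarity transform of $U$, and then chaining Weyl's majorant inequality $|\lambda_1\lambda_2|\le\sigma_1\sigma_2$ with the singular-value product inequality $\sigma_1\sigma_2(AB)\le\sigma_1(A)\sigma_2(A)\sigma_1(B)\sigma_2(B)$; the diagonal factor $D(u)D^{-1}(Pu)$ then contributes two singular values, each bounded by $\max_i u[i]/\min_i u[i]$, which is exactly where the square in the statement comes from. You instead establish the exact congruence $S_U = E\,S_P\,E$ with $E = D^{1/2}(u)D^{-1/2}(Pu)$ (the identity checks out: the $\mu_P$-factors and the normalizer $\alpha$ cancel precisely as you describe) and invoke Ostrowski's quantitative form of Sylvester's law, so the diagonal distortion enters only once, through $\lambda_{\max}(E^2)=\max_i u[i]/(Pu)[i]\le \max_i u[i]/\min_i u[i]$. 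This yields the strictly stronger bound $|\lambda_2(U)|\le|\lambda_2(P)|\cdot\max_i u[i]/\min_i u[i]$, of which the stated inequality is a weakening; your congruence identity is also reusable and would tighten the exponent in the paper's main theorem. One small point to nail down: Ostrowski pairs eigenvalues in algebraic (decreasing) order, whereas $\lambda_2$ here is the second largest in modulus, so ``taking $k=2$'' is not quite enough on its own. The fix is immediate: since $\theta_k\le\lambda_{\max}(E^2)$ for every index $k$ and $|\lambda_k(S_P)|\le|\lambda_2(P)|$ for every $k\ge 2$, taking the maximum of $|\lambda_k(S_U)|$ over all $k\ge 2$ (which is $|\lambda_2(U)|$, as $\lambda_1(S_U)=1$ is matched with $\lambda_1(S_P)=1$) gives the claimed bound.
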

\begin{proof}
\begin{align*}
|\lambda_2(U)| &= \\ 
\textit{(U is stochastic)}\quad  &= |\lambda_1(U)| |\lambda_2(U)| \\
\textit{(Similarity)} \quad&=  |\lambda_1(D^{1/2}(\mu_u)D(Pu)UD^{-1}(Pu)D^{-1/2}(\mu_u))| |\lambda_2(D^{1/2}(\mu_u)D(Pu)UD^{-1}(Pu)D^{-1/2}(\mu_u))| \\
\textit{(Definition of U)} \quad&= |\lambda_1(D^{1/2}(\mu_u)P D^{-1/2}(\mu_u) D(u)D^{-1}(Pu))| |\lambda_2(D^{1/2}(\mu_u)P D^{-1/2}(\mu_u) D(u)D^{-1}(Pu))| \\
 \textit{(Weyl's inequality)} \quad & \leq \sigma_1(D^{1/2}(\mu_u)P D^{-1/2}(\mu_u) D(u)D^{-1}(Pu)) \sigma_2(D^{1/2}(\mu_u)P D^{-1/2}(\mu_u) D(u)D^{-1}(Pu)) \\
 \textit{(Inequality from \cite[p.~289]{hogben2006handbook})}\quad & \leq \sigma_1(D^{1/2}(\mu_u)P D^{-1/2}(\mu_u)) \sigma_1(D(u)D^{-1}(Pu)) \sigma_2(D^{1/2}(\mu_u)P D^{-1/2}(\mu_u)) \sigma_2(D(u)D^{-1}(Pu)) \\
\textit{(Symmetry from Proposition~\ref{prop: similarity})}\quad & = |\lambda_1(D^{1/2}(\mu_u)P D^{-1/2}(\mu_u))| \sigma_1(D(u)D^{-1}(Pu)) |\lambda_2(D^{1/2}(\mu_u)P D^{-1/2}(\mu_u))| \sigma_2(D(u)D^{-1}(Pu)) \\
\textit{(Similarity from Proposition~\ref{prop: similarity})}\quad & = \sigma_1(D(u)D^{-1}(Pu)) |\lambda_2(P)| \sigma_2(D(u)D^{-1}(Pu)) \\
& \leq |\lambda_2(P)| \left(\max_i\left(\frac{u[i]}{Pu[i]}\right)\right)^2 \\
& \leq |\lambda_2(P)| \left(\frac{\max_i u[i]}{\min_i u[i]}\right)^2
\end{align*}
\end{proof}
The following two results are not directly related to Tilted matrices, but to the product of general reversible matrices. The first result is a special case of the second.
\begin{lemma} \label{lem: prod of two rev}
Let $P_1$ and $P_2$ be stochastic reversible matrices with stationary distributions $\mu_1$ and $\mu_2,$ respectively. Then,

\begin{equation*}
|\lambda_2(P_1 P_2)| \leq  |\lambda_2(P_1)| |\lambda_2(P_1)|\max_i \frac{\mu_1[i]}{\mu_2[i]} \max_i \frac{\mu_2[i]}{\mu_1[i]}.
\end{equation*}
\end{lemma}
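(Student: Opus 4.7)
The plan mirrors the structure of Proposition~\ref{prop: Tilted 2nd ev}: first exhibit a matrix similar to $P_1 P_2$ that breaks into a symmetric piece for each $P_i$ interleaved with diagonal conjugations, then apply Weyl's inequality to pass from eigenvalues to singular values, and finally peel off the factors via the multiplicative singular-value inequality used in that earlier proof.

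Concretely, by Proposition~\ref{prop: similarity} each $S_i := D^{1/2}(\mu_i)\, P_i\, D^{-1/2}(\mu_i)$ is symmetric and similar to $P_i$. Writing $P_i = D^{-1/2}(\mu_i) S_i D^{1/2}(\mu_i)$ and substituting into $P_1 P_2$, an outer conjugation by $D^{1/2}(\mu_1)$ should show $P_1 P_2 \sim S_1 M S_2 M^{-1}$, where $M := D^{1/2}(\mu_1) D^{-1/2}(\mu_2)$ is diagonal. Hence $\lambda_j(P_1 P_2) = \lambda_j(S_1 M S_2 M^{-1})$ for every $j$.

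Next, since $P_1 P_2$ is itself stochastic, $|\lambda_1(P_1 P_2)| = 1$, so $|\lambda_2(P_1 P_2)| = |\lambda_1(S_1 M S_2 M^{-1})\,\lambda_2(S_1 M S_2 M^{-1})|$, and Weyl's inequality bounds this by $\sigma_1(S_1 M S_2 M^{-1})\,\sigma_2(S_1 M S_2 M^{-1})$. I would then invoke the inequality from \cite[p.~289]{hogben2006handbook} three times, splitting the product as $(S_1)(M S_2 M^{-1})$, then $(M)(S_2 M^{-1})$, and finally $(S_2)(M^{-1})$, to upper bound the above by $\sigma_1(S_1)\sigma_2(S_1) \cdot \sigma_1(M)\sigma_2(M) \cdot \sigma_1(S_2)\sigma_2(S_2) \cdot \sigma_1(M^{-1})\sigma_2(M^{-1})$.

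The last step identifies each factor: since $S_i$ is symmetric and shares the spectrum of the stochastic matrix $P_i$, $\sigma_1(S_i)\sigma_2(S_i) = 1 \cdot |\lambda_2(P_i)|$; and since $M$ is diagonal with positive entries, $\sigma_1(M)\sigma_2(M) \le \sigma_1(M)^2 = \max_i \mu_1[i]/\mu_2[i]$ and analogously $\sigma_1(M^{-1})\sigma_2(M^{-1}) \le \max_i \mu_2[i]/\mu_1[i]$. Multiplying these yields the claim. The main obstacle is arranging the outer conjugation cleanly so that $S_1$ and $S_2$ appear in alternation with the diagonal similarity $M(\cdot)M^{-1}$; once that decomposition is in hand the rest is routine bookkeeping analogous to Proposition~\ref{prop: Tilted 2nd ev}.
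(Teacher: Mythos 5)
Your proposal is correct and follows essentially the same route as the paper's proof: conjugate $P_1P_2$ by $D^{1/2}(\mu_1)$ so it becomes $S_1 M S_2 M^{-1}$, use $|\lambda_1(P_1P_2)|=1$ and Weyl's inequality to pass to $\sigma_1\sigma_2$, split via the multiplicative singular-value inequality, and identify $\sigma_1\sigma_2(S_i)=|\lambda_2(P_i)|$ and the diagonal factors with the ratios $\max_i \mu_1[i]/\mu_2[i]$ and $\max_i \mu_2[i]/\mu_1[i]$. The only cosmetic difference is that you peel off $M$ and $M^{-1}$ with two extra applications of the product inequality, whereas the paper bounds the conjugated middle term $M S_2 M^{-1}$ in one step; both give the identical bound.
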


\begin{proof}
For brevity, for matrix $A,$ denote $\sigma_1 \sigma_2(A):= \sigma_1(A) \sigma_2(A)$ and $\lambda_1 \lambda_2(A):= \lambda_1(A) \lambda_2(A).$ Then,
\begin{align*}
|\lambda_2(P_1P_2)| &= |\lambda_1(P_1P_2)|| \lambda_2(P_1P_2)| \\
\textit{(Similarity)} \quad &= |\lambda_1(P_1P_2)|| \lambda_2\left(D^{1/2}(\mu_{1})P_{1}D^{-1/2}(\mu_{1})D^{1/2}(\mu_{1})P_{2}D^{-1/2}(\mu_{1})\right)| \\
\textit{(Weyl's inequality)} \quad & \leq \sigma_1 \sigma_2\left(D^{1/2}(\mu_{1})P_{1}D^{-1/2}(\mu_{1})D^{1/2}(\mu_{1})P_{2}D^{-1/2}(\mu_{1})\right) \\
\textit{(Inequality from \cite[p.~289]{hogben2006handbook})}\quad & \leq \sigma_1 \sigma_2\left(D^{1/2}(\mu_{1})P_{1}D^{-1/2}(\mu_{1})\right) \sigma_1 \sigma_2\left(D^{1/2}(\mu_{1})P_{2}D^{-1/2}(\mu_{1})\right) \\
\textit{(Symmetry from Proposition~\ref{prop: similarity})}\quad &= |\lambda_1 \lambda_2\left(D^{1/2}(\mu_{1})P_{1}D^{-1/2}(\mu_{1})\right)| \sigma_1 \sigma_2\left(D^{1/2}(\frac{\mu_1}{\mu_2})D^{1/2}(\mu_{2})P_{2}D^{-1/2}(\mu_{2})D^{1/2}(\frac{\mu_2}{\mu_1})\right) \\
&= |\lambda_2\left(P_{1}\right)| \sigma_1 \sigma_2\left(D^{1/2}(\frac{\mu_1}{\mu_2})D^{1/2}(\mu_{2})P_{2}D^{-1/2}(\mu_{2})D^{1/2}(\frac{\mu_2}{\mu_1})\right) \\
& \leq |\lambda_2\left(P_{1}\right)| \sigma_1 \sigma_2\left(D^{1/2}(\mu_{2})P_{2}D^{-1/2}(\mu_{2})\right) \max_i \frac{\mu_1[i]}{\mu_2[i]} \max_i \frac{\mu_2[i]}{\mu_1[i]} \\
& =  |\lambda_2\left(P_{1}\right)| |\lambda_2\left(P_{2}\right)| \max_i \frac{\mu_1[i]}{\mu_2[i]} \max_i \frac{\mu_2[i]}{\mu_1[i]}.
\end{align*}
\end{proof}

\begin{proposition} \label{prop: prod of reversible}
Let $\{P_i\}_{i=1}^n$ be stochastic reversible matrices with stationary distributions $\{\mu_i\}_{i=1}^n,$ respectively. Denote their product by $P^{\text{prod}}_n=\prod_{i=1}^n P_i$. Then,

\begin{equation*}
|\lambda_2(P^{\text{prod}}_n)| \leq  \left(\prod_{i=1}^{n}|\lambda_2\left(P_i \right) |\right) \left(\prod_{i=2}^{n} \max_j \frac{\mu_{i-1}[j]}{\mu_i[j]}\right) \max_j \frac{\mu_n[j]}{\mu_1[j]}.
\end{equation*}
\end{proposition}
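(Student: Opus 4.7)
The plan is to mirror the proof of Lemma~\ref{lem: prod of two rev} and generalize it to $n$ factors via a direct telescoping. First I would use similarity to pass from $P^{\text{prod}}_n$ to $D^{1/2}(\mu_1) P^{\text{prod}}_n D^{-1/2}(\mu_1)$, which has the same eigenvalues. By Proposition~\ref{prop: similarity}, reversibility means each $P_i$ can be written as $P_i = D^{-1/2}(\mu_i) S_i D^{1/2}(\mu_i)$ with $S_i := D^{1/2}(\mu_i) P_i D^{-1/2}(\mu_i)$ symmetric. Substituting these expressions and collapsing adjacent diagonal blocks of the form $D^{1/2}(\mu_i)D^{-1/2}(\mu_{i+1}) = D^{1/2}(\mu_i/\mu_{i+1})$, the conjugated product telescopes into
\begin{equation*}
D^{1/2}(\mu_1) P^{\text{prod}}_n D^{-1/2}(\mu_1) = S_1 \, D^{1/2}(\mu_1/\mu_2) \, S_2 \, D^{1/2}(\mu_2/\mu_3) \cdots S_{n-1}\, D^{1/2}(\mu_{n-1}/\mu_n)\, S_n \, D^{1/2}(\mu_n/\mu_1),
\end{equation*}
where the last ``closing'' bridge $D^{1/2}(\mu_n/\mu_1)$ is produced by the outer $D^{-1/2}(\mu_1)$ commuting past the final $D^{1/2}(\mu_n)$.

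Next, since $P^{\text{prod}}_n$ is stochastic, $|\lambda_1(P^{\text{prod}}_n)| = 1$, so $|\lambda_2(P^{\text{prod}}_n)| = |\lambda_1 \lambda_2(P^{\text{prod}}_n)|$. Weyl's inequality bounds this by $\sigma_1 \sigma_2$ of the telescoped product, and I would then apply the submultiplicativity $\sigma_1 \sigma_2(AB) \leq \sigma_1\sigma_2(A)\,\sigma_1\sigma_2(B)$ from \cite[p.~289]{hogben2006handbook} repeatedly to distribute across all $2n$ factors. These are exactly the same two ingredients used in Lemma~\ref{lem: prod of two rev}, only iterated.

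Estimating each factor is then routine. For every symmetric $S_i$, similarity to $P_i$ gives $\sigma_1 \sigma_2(S_i) = |\lambda_1 \lambda_2(P_i)| = |\lambda_2(P_i)|$. For every diagonal bridge, the singular values are the diagonal entries, so $\sigma_1 \sigma_2(D^{1/2}(\mu_{i-1}/\mu_i)) \leq \sigma_1^2 = \max_j \mu_{i-1}[j]/\mu_i[j]$, and likewise $\sigma_1 \sigma_2(D^{1/2}(\mu_n/\mu_1)) \leq \max_j \mu_n[j]/\mu_1[j]$. Multiplying the per-factor estimates assembles exactly the stated bound.

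I do not expect a conceptual obstacle; the only delicate point is index bookkeeping, specifically verifying that after collapsing inserted identities the ``loop-closing'' bridge $D^{1/2}(\mu_n/\mu_1)$ appears exactly once---this is the asymmetric term responsible for the extra factor $\max_j \mu_n[j]/\mu_1[j]$ in the bound. I deliberately avoid an inductive argument that invokes Lemma~\ref{lem: prod of two rev} on $P^{\text{prod}}_{n-1}$ and $P_n$, since the stationary distribution of the partial product $P^{\text{prod}}_{n-1}$ is not one of the $\mu_i$ and would not lead to the clean form stated.
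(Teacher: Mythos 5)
Your proposal is correct and follows essentially the same route as the paper: conjugate by $D^{1/2}(\mu_1)$, insert the identities $D^{-1/2}(\mu_i)D^{1/2}(\mu_i)$ so the product telescopes into symmetric blocks separated by diagonal bridges (including the single loop-closing bridge $D^{1/2}(\mu_n/\mu_1)$), then apply Weyl's inequality and the $\sigma_1\sigma_2$ submultiplicativity factor by factor. The per-factor estimates you give match the paper's exactly, so there is nothing to add.
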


\begin{proof}
For brevity, for matrix $A,$ denote $\sigma_1 \sigma_2(A):= \sigma_1(A) \sigma_2(A)$ and $\lambda_1 \lambda_2(A):= \lambda_1(A) \lambda_2(A).$ Then, using similar arguments to those in the proof of Lemma~\ref{lem: prod of two rev}, we have that
\begin{align*}
|\lambda_2(P^{\text{prod}}_n)| &= |\lambda_1 \lambda_2(P^{\text{prod}}_n)| \\
&= |\lambda_1 \lambda_2\left(D^{1/2}(\mu_{1}) \left[\prod_{i=1}^n P_i D^{-1/2}(\mu_i) D^{1/2}(\mu_i) \right] D^{-1/2}(\mu_{1})\right)| \\
&\leq \sigma_1 \sigma_2\left(D^{1/2}(\mu_{1}) \left[\prod_{i=1}^n P_i D^{-1/2}(\mu_i) D^{1/2}(\mu_i) \right] D^{-1/2}(\mu_{1})\right) \\
&\leq \sigma_1 \sigma_2\left(D^{1/2}(\mu_{1}) P_1 D^{-1/2}(\mu_1) \right)\left[\prod_{i=2}^{n} \sigma_1\sigma_2\left(D^{1/2}(\mu_{i-1}) P_i D^{-1/2}(\mu_{i}) \right) \right] \sigma_1\sigma_2 \left(D^{1/2}(\mu_n) D^{-1/2}(\mu_1)\right) \\
&\leq |\lambda_2\left(P_1 \right)| \left[\prod_{i=2}^{n} \sigma_1\sigma_2\left(D^{1/2}(\frac{\mu_{i-1}}{\mu_i}) \right) \sigma_1\sigma_2\left(D^{1/2}(\mu_{i}) P_i D^{-1/2}(\mu_{i}) \right) \right] \max_j \frac{\mu_n[j]}{\mu_1[j]} \\
&\leq |\lambda_2\left(P_1 \right) |\left[\prod_{i=2}^{n} \max_j \frac{\mu_{i-1}[j]}{\mu_i[j]} |\lambda_2\left(P_i\right)| \right] \max_j \frac{\mu_n[j]}{\mu_1[j]} \\
&= \prod_{i=1}^{n}|\lambda_2\left(P_i \right) | \prod_{i=2}^{n} \max_j \frac{\mu_{i-1}[j]}{\mu_i[j]} \max_j \frac{\mu_n[j]}{\mu_1[j]}.
\end{align*}
\end{proof}

\section{Main Result: Product of Tilted Reversible Matrices}
We now provide our main results on the second eigenvalue and, consequently, the convergence rate of the product of Tilted reversible matrices.
\begin{theorem}
\label{thm: decay rate of second ev}
Let $\{u_i\}_{i=1}^n$ be strictly positive vectors and let $P$ be a stochastic reversible matrix. For every $i,$ define the $u_i$-Tilted matrix $P_i = D^{-1}(Pu_i)PD(u_i)$, and define the product $P^{\text{prod}}_n=\prod_{i=1}^n P_i$. Then,
\begin{equation*}
|\lambda_2(P^{\text{prod}}_n)| \leq |\lambda_2(P)|^n \prod_{i=1}^n \left( \frac{\max_j u_i[j]}{\min_j u_i[j]}\right)^4.
\end{equation*}
\end{theorem}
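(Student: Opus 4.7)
The plan is to combine the three earlier results on reversible matrices, applied to the $P_i$ themselves. By Proposition~\ref{prop: Tilted is reversible}, each $P_i$ is reversible with stationary distribution $\mu_i = \alpha_i \cdot u_i \circ Pu_i \circ \mu_P$, where $\alpha_i = (\ones^\tr D(u_i) D(Pu_i) D(\mu_P) \ones)^{-1}$. This makes $\{P_i\}_{i=1}^n$ a sequence of reversible stochastic matrices, so Proposition~\ref{prop: prod of reversible} applies and gives
\[
|\lambda_2(P^{\text{prod}}_n)| \leq \prod_{i=1}^n |\lambda_2(P_i)| \cdot \prod_{i=2}^n \max_j \frac{\mu_{i-1}[j]}{\mu_i[j]} \cdot \max_j \frac{\mu_n[j]}{\mu_1[j]}.
\]

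Writing $c_i := \max_j u_i[j]/\min_j u_i[j]$, the plan is to bound the first factor by $|\lambda_2(P)|^n \prod_i c_i^2$ and the second by $\prod_i c_i^2$, so that the two $c_i^2$ factors combine into the target $c_i^4$. Proposition~\ref{prop: Tilted 2nd ev} directly supplies the first half, giving $|\lambda_2(P_i)| \leq |\lambda_2(P)| c_i^2$. For the distribution ratios, I substitute the explicit form of $\mu_i$; then $\mu_P$ cancels and
\[
\frac{\mu_{i-1}[j]}{\mu_i[j]} = \frac{\alpha_{i-1}}{\alpha_i} \cdot \frac{f_{i-1}[j]}{f_i[j]}, \qquad f_i[j] := u_i[j](Pu_i)[j].
\]
The key observation is that the scalars $\alpha_i$ do not depend on $j$, so they pull out of each $\max_j$; across the full expression the $\alpha$'s telescope: $\prod_{i=2}^n (\alpha_{i-1}/\alpha_i) \cdot (\alpha_n/\alpha_1) = 1$. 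What remains is $\prod_{i=2}^n \max_j(f_{i-1}[j]/f_i[j]) \cdot \max_j(f_n[j]/f_1[j])$, which by bounding each max of a ratio by the ratio of max and min is at most $\prod_i \max_j f_i[j]/\min_j f_i[j]$. Finally, since $P$ is stochastic, $(Pu_i)[j]$ is a convex combination of entries of $u_i$, so $(Pu_i)[j] \in [\min_k u_i[k], \max_k u_i[k]]$ and $\max_j f_i[j]/\min_j f_i[j] \leq c_i^2$, delivering the remaining $\prod_i c_i^2$.

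The step I expect to be the main obstacle is precisely this telescoping cancellation of the $\alpha_i$. If instead one bounds each ratio $\mu_{i-1}[j]/\mu_i[j]$ crudely using separate maxima on the numerator and denominator (including $\alpha_{i-1}/\alpha_i$), the argument accumulates an extra power of $c_i$ per index and produces only the weaker $|\lambda_2(P)|^n \prod_i c_i^6$. Once the telescoping is exploited, the remaining manipulations---the entrywise bound on $(Pu_i)[j]$ and the reindexing of the $\max_j f_i / \min_j f_i$ terms---are routine.
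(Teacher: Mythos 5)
Your proposal is correct and follows exactly the same route as the paper: Proposition~\ref{prop: prod of reversible} for the product, Proposition~\ref{prop: Tilted 2nd ev} for each $|\lambda_2(P_i)|$, and Proposition~\ref{prop: Tilted is reversible} to substitute the explicit $\mu_i$ and cancel $\mu_P$ and the $\alpha_i$. Your write-up is in fact more explicit than the paper's in the final step, where you spell out the telescoping, the reindexing of the $\max_j f_i/\min_j f_i$ terms, and the convex-combination bound on $(Pu_i)[j]$ that the paper leaves implicit.
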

\begin{proof}
From Proposition~\ref{prop: Tilted is reversible}, we have that for every $i,$ $P_i$ is reversible with stationary distribution $\mu_i= \alpha_i \cdot \mu_P \circ u_i \circ Pu_i$ with $\alpha_i$ as in Proposition~\ref{prop: Tilted is reversible}. Thus,
We have that
\begin{align*}
&|\lambda_2(P^{\text{prod}}_n)| \\
\textit{(Proposition~\ref{prop: prod of reversible})} \quad &\leq  \left(\prod_{i=1}^{n}|\lambda_2\left(P_i \right) |\right) \left(\prod_{i=2}^{n} \max_j \frac{\mu_{i-1}[j]}{\mu_i[j]}\right) \max_j \frac{\mu_n[j]}{\mu_1[j]} \\
\textit{(Proposition~\ref{prop: Tilted 2nd ev})} \quad 
&\leq |\lambda_2(P)|^n \prod_{i=1}^n \left( \frac{\max_j u_i[j]}{\min_j u_i[j]}\right)^2 \left(\prod_{i=2}^{n} \max_j \frac{\mu_{i-1}[j]}{\mu_i[j]}\right) \max_j \frac{\mu_n[j]}{\mu_1[j]} \\
\textit{(Proposition~\ref{prop: Tilted is reversible})} \quad 
&\leq |\lambda_2(P)|^n \prod_{i=1}^n \left( \frac{\max_j u_i[j]}{\min_j u_i[j]}\right)^2 \left(\prod_{i=2}^{n} \max_j \frac{\alpha_{i-1}\cdot \mu_P[j]\cdot u_{i-1}[j] \cdot Pu_{i-1}[j]}{\alpha_{i}\cdot \mu_P[j]\cdot u_i[j] \cdot Pu_i[j]}\right) \max_j \frac{\alpha_{n}\cdot \mu_P[j]\cdot u_{n}[j] \cdot Pu_{n}[j]}{\alpha_{1}\cdot \mu_P[j]\cdot u_1[j] \cdot Pu_1[j]}\\
& \leq |\lambda_2(P)|^n \prod_{i=1}^n \left( \frac{\max_j u_i[j]}{\min_j u_i[j]}\right)^4,
\end{align*}
where $\alpha_i$ as in Proposition~\ref{prop: Tilted is reversible}.
\end{proof}

\begin{corollary}
Define $P,~u_i,~P_i,$ and $P^{\text{prod}}_n$ as in Theorem~\ref{thm: decay rate of second ev}. If $\lim_{i\rightarrow \infty} u_i = c \cdot \ones$ for some constant $c,$ then
\begin{equation*}
    P^{\text{prod}}_n \rightarrow  \ones \mu ^\top,
\end{equation*}
where $\mu$ is some probability vector. Further,
the rate of convergence is $|\lambda_2(P)|.$
\end{corollary}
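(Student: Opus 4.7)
The plan is to split the corollary into two claims: (i) the limit matrix equals $\ones\mu^\top$ for some probability vector $\mu$, and (ii) the convergence is geometric at rate $|\lambda_2(P)|$. The central observation is that $u_i\to c\ones$ implies $Pu_i\to cP\ones=c\ones$, hence $P_i=D^{-1}(Pu_i)PD(u_i)\to D^{-1}(c\ones)PD(c\ones)=P$; so the $P_i$'s become close to $P$ and $P^{\text{prod}}_n$ should behave like $P^n$ for large $n$.

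For (i), I would decompose $P^{\text{prod}}_n = (P_1\cdots P_k)(P_{k+1}\cdots P_n)$ with $k=k(n)$ chosen so that both $k$ and $n-k$ tend to infinity. A telescoping bound
\begin{equation*}
\|P_{k+1}\cdots P_n - P^{n-k}\| \leq \sum_{j=k+1}^n \|P_j-P\|
\end{equation*}
(valid in the $\ell_\infty$-operator norm since stochastic matrices have norm $1$) tends to zero for large $k$. Together with $\|P^{n-k}-\ones\mu_P^\top\|\to 0$ from the spectral gap of the reversible matrix $P$, and using the identity $P_1\cdots P_k\ones=\ones$ by stochasticity, this yields $P^{\text{prod}}_n\to\ones\mu_P^\top$. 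Hence $\mu=\mu_P$.

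For (ii), I would apply Theorem~\ref{thm: decay rate of second ev} directly. Setting $r_i=\max_j u_i[j]/\min_j u_i[j]$, the hypothesis gives $r_i\to 1$, so $\log r_i^4\to 0$. By Cesàro's theorem, $\bigl(\prod_{i=1}^n r_i^4\bigr)^{1/n}\to 1$, and taking $n$-th roots in the bound of Theorem~\ref{thm: decay rate of second ev} yields $\limsup_n|\lambda_2(P^{\text{prod}}_n)|^{1/n}\leq|\lambda_2(P)|$. The operator-norm rate $\|P^{\text{prod}}_n-\ones\mu^\top\|^{1/n}\to|\lambda_2(P)|$ then follows from combining this spectral bound with the rank-1 asymptotic structure established in (i).

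The main obstacle is converting the spectral bound on $|\lambda_2(P^{\text{prod}}_n)|$ into the claimed operator-norm bound on $\|P^{\text{prod}}_n-\ones\mu^\top\|$; for a non-reversible stochastic matrix (which $P^{\text{prod}}_n$ in general is) such a conversion is delicate, as it requires control over the conditioning of the eigenbasis. Fortunately, each factor $P_i$ is reversible by Proposition~\ref{prop: Tilted is reversible} with stationary distribution $\mu_i\to\mu_P$, so the eigenbasis should be asymptotically well-conditioned in the $\mu_P$-weighted inner product, and a standard perturbation argument translates the spectral decay into the claimed operator-norm rate.
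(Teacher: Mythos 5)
Your route is genuinely different from the paper's, whose entire proof is a one-sentence appeal to Theorem~\ref{thm: decay rate of second ev} plus the assertion that the limit is a rank-1 stochastic matrix; your identification of $\mu=\mu_P$ via the factorization $P^{\text{prod}}_n=(P_1\cdots P_k)(P_{k+1}\cdots P_n)$ and your Ces\`aro observation that $\bigl(\prod_{i=1}^n r_i^4\bigr)^{1/n}\to 1$ are correct and make explicit two steps the paper silently skips (the second is genuinely needed, since $\prod_i r_i^4$ may diverge even when $r_i\to1$). However, there are two gaps. The first is in part (i): the telescoping sum $\sum_{j=k+1}^{n}\|P_j-P\|$ does \emph{not} tend to zero merely because $k$ and $n-k$ both tend to infinity and $\|P_j-P\|\to 0$ --- the hypothesis $u_i\to c\ones$ gives no summability, and with $\|P_j-P\|=1/j$ and $k=n/2$ the sum tends to $\log 2$. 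The decomposition can be rescued, but only by choosing the window so that $(n-k)\sup_{j>k}\|P_j-P\|\to 0$ while $n-k\to\infty$ (e.g.\ $n-k\approx\min\bigl\{(\sup_{j>n/2}\|P_j-P\|)^{-1/2},\,n/2\bigr\}$); as written, "tends to zero for large $k$" is false for a generic admissible $k(n)$.

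The second, more serious gap is that part (ii) never actually establishes the claimed operator-norm rate. You correctly identify this as the main obstacle, but the proposed resolution --- that the eigenbasis of $P^{\text{prod}}_n$ is asymptotically well-conditioned because each factor is reversible --- is unsupported: the factors are reversible with respect to \emph{different} measures $\mu_i$ (Proposition~\ref{prop: Tilted is reversible}), the early factors $P_1,\dots,P_k$ are fixed and need not be close to $P$, and a product of such matrices is in general neither reversible nor normal in any fixed weighted inner product. The paper proves similarity of such a product to a positive semi-definite matrix only for two factors (Proposition~\ref{lem:product2}) and explicitly leaves the $n$-factor case as a Conjecture, so you cannot lean on it. Without a uniform bound on the condition number of a diagonalizing similarity for $P^{\text{prod}}_n$, the spectral bound $\limsup_n|\lambda_2(P^{\text{prod}}_n)|^{1/n}\le|\lambda_2(P)|$ does not transfer to $\|P^{\text{prod}}_n-\ones\mu^\top\|$. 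To be fair, the paper's own proof has exactly the same lacuna (a vanishing second eigenvalue of a nonnormal matrix does not by itself control the distance to a rank-1 matrix); but your write-up promises to close this gap and then does not, so either prove the conditioning claim or restate the conclusion as a bound on $|\lambda_2(P^{\text{prod}}_n)|$ only.
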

\begin{proof}
    The proof immediately follows from Theorem~\ref{thm: decay rate of second ev}, considering that $P^{\text{prod}}_\infty$ is a stochastic matrix of rank 1.
\end{proof}
\begin{remark}
In this section and specifically the above result, we assume the matrices in the product to be reversible. One can obviate this assumption and obtain a similar result, but at the cost of an unknown convergence rate coefficient. The latter result was proved in  \cite[Appendix~A.1.8]{dalal2023softtreemax}.
\end{remark}

\bibliographystyle{plain}
\bibliography{product_of_stochastic}

\end{document}